\theoremstyle{plain}
\newtheorem*{theorem}{Theorem}
\newtheorem*{lemma}{Lemma}
\newtheorem*{conjecture}{Conjecture}
\newtheorem*{proposition}{Proposition}
\theoremstyle{definition}
\newtheorem*{problem}{Problem}
\theoremstyle{remark}
\newtheorem{remark}{Remark}
\title[Packings with high chromatic number]{Ball packings with high chromatic numbers\\from strongly regular graphs}
\author{Hao Chen}
\address{Departement of Mathematics and Computer Science, Technische Universiteit Eindhoven}
\email{hao.chen@tue.nl}
\keywords{Strongly regular graphs, ball packing, chromatic number}
\subjclass[2010]{05C15, 05E30, 52C17}
\thanks{The author was supported by the ERC Advanced Grant number 247029
``SDModels'' and by NWO/DIAMANT grant number 613.009.031.  Part of the research
was conducted while the author was at Freie Universit\"at Berlin.}
\begin{document}

\begin{abstract}
	Inspired by Bondarenko's counter-example to Borsuk's conjecture, we notice
	some strongly regular graphs that provide examples of ball packings whose
	chromatic numbers are significantly higher than the dimensions.  In
	particular, from generalized quadrangles we obtain unit ball packings in
	dimension $q^3-q^2+q$ with chromatic number $q^3+1$, where $q$ is a prime
	power.  This improves the previous lower bound for the chromatic number of
	ball packings.
\end{abstract}

\maketitle

\section{The problem and previous works}\label{sec:problem}
A \emph{ball packing} in $d$-dimensional Euclidean space is a collection of
balls with disjoint interiors.  The \emph{tangency graph} of a ball packing
takes the balls as vertices and the tangent pairs as edges.  The
\emph{chromatic number} of a ball packing is defined as the chromatic number of
its tangency graph.

The Koebe--Andreev--Thurston disk packing theorem says that every planar graph
is the tangency graph of a $2$-dimensional ball packing.  The following
question is asked by Bagchi and Datta in~\cite{bagchi2013} as a higher
dimensional analogue of the four-colour theorem:
\begin{problem}
	What is the maximum chromatic number $\chi(d)$ over all the ball packings in
	dimension~$d$?
\end{problem}
The authors gave $d+2\le\chi(d)$ as a lower bound since it is easy to construct
$d+2$ mutually tangent balls.  By ordering the balls by size, the authors also
argued that $\kappa(d)+1$ is an upper bound, where $\kappa(d)$ is the kissing
number for dimension $d$.

However, the case of $d=3$ has already been investigated by
Maehara~\cite{maehara2007}, who proved that $6\le\chi(3)\le13$.  His
construction for the lower bound uses a variation of Moser's spindle, which is
the tangency graph of an \emph{unit} disk packing in dimension $2$ with
chromatic number $4$, and the following lemma:
\begin{lemma}
	If there is a unit ball packing in dimension $d$ with chromatic number
	$\chi$, then there is a ball packing in dimension $d+1$ with chromatic number
	$\chi+2$.
\end{lemma}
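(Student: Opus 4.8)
The plan is to place the given packing inside a hyperplane of $\mathbb{R}^{d+1}$ and to adjoin two balls, each tangent to all the original balls and tangent to one another, so that the new tangency graph is the join $G \vee K_2$, where $G$ is the tangency graph of the given packing. Concretely, view $\mathbb{R}^d \subset \mathbb{R}^{d+1}$ as the hyperplane $z = 0$ (with $z$ the last coordinate), and regard each original ball, centred at $(c_i, 0)$ with radius $1$, as a unit ball of $\mathbb{R}^{d+1}$. Every such ball lies in the slab $\{-1 \le z \le 1\}$ and is tangent to both bounding hyperplanes $\{z = 1\}$ and $\{z = -1\}$, touching them at $(c_i, 1)$ and $(c_i, -1)$. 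Thus the two closed half-spaces $H_+ = \{z \ge 1\}$ and $H_- = \{z \le -1\}$ are each tangent to every ball of the packing, and this is exactly where the unit hypothesis enters: only equal radii let a single hyperplane be tangent to all the balls at once.

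Two half-spaces are not balls, and $H_+, H_-$ are not tangent to each other, so the decisive step is to apply a sphere inversion that turns this configuration into an honest ball packing. I would choose the centre of inversion to be a point $p$ lying in the open slab $\{-1 < z < 1\}$ but far from every ball in the horizontal directions, so that $p$ lies in none of the closed unit balls and in neither half-space. Under inversion centred at $p$, each unit ball (not containing $p$) maps to a ball, and each half-space $H_\pm$ (whose bounding hyperplane misses $p$ and which does not contain $p$) maps to a bounded ball $B_\pm$ with $p$ on its boundary sphere. Since inversion is a conformal homeomorphism of $\mathbb{R}^{d+1} \cup \{\infty\}$ away from $p$, it preserves disjointness of interiors and sends tangent pairs to tangent pairs; hence the images form a ball packing in which each $B_\pm$ is tangent to every image of a unit ball, while the images of the unit balls reproduce $G$ exactly. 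The crux of the argument is that the two parallel hyperplanes $\{z = 1\}$ and $\{z = -1\}$ meet only at the point at infinity, where they are tangent; inversion sends $\infty$ to $p$, so $B_+$ and $B_-$ meet only at the single point $p$ and are therefore tangent. I would verify this tangency carefully, together with the claim that no spurious tangencies or overlaps are created, as these are the points where the construction could fail.

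It then remains to compute the chromatic number of the resulting packing, whose tangency graph is $G \vee K_2$: the two vertices $B_+, B_-$ are adjacent to one another and to every vertex of $G$. For the lower bound, observe that in any proper colouring the colours appearing on $G$ are disjoint from those appearing on $\{B_+, B_-\}$, since every vertex of $G$ is adjacent to both; the former set uses at least $\chi$ colours and the latter exactly $2$, so at least $\chi + 2$ colours are needed. For the upper bound, colour the copy of $G$ with $\chi$ colours and give $B_+, B_-$ two further colours, which is proper. Hence the new packing has chromatic number exactly $\chi + 2$, as required.
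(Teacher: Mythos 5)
Your proof is correct and is essentially the standard argument behind this lemma, which the paper itself does not prove but quotes from Maehara: embed the unit packing in a slab of $\mathbb{R}^{d+1}$, adjoin the two bounding half-spaces (tangent to each other at $\infty$), and invert at a point of the open slab outside all balls to realize the join $G \vee K_2$ as a genuine packing. One cosmetic point: the inversion centre $p$ need not be ``far from every ball in the horizontal directions'' (which may be impossible for an infinite packing); it suffices that $p$ lies in the open slab and strictly outside every closed ball, and such a point always exists because at a height $0<t<1$ the slices of the balls are balls of radius $\sqrt{1-t^2}$ about centres that are pairwise far apart, so they cannot cover the hyperplane $z=t$.
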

The technique of Maehara~\cite{maehara2007} can be easily generalized to higher
dimensions and gives $d+3\le\chi(d)$.

Another progress is made by Cantwell in an answer on
MathOverflow~\cite{mathoverflow}, who proved that the graph of the halved
$5$-cube (also called the Clebsch graph) is the tangency graph of a
$5$-dimensional \emph{unit} ball packing with chromatic number $8$.  Then the
Lemma implies that $10\le\chi(6)$.  This argument can be generalized to higher
dimensions using a result of Linial, Meshulam and
Tarsi~\cite{linial1988}*{Theorem~4.1}, and gave $d+4\le\chi(d)$ for $d=2^k-2$.

As we have seen, both constructions study the chromatic number of unit ball
packings and invoke the Lemma.  We will do the same.  A unit ball packing can
be regarded as a set of points such that the minimum distance between pairs of
points is at least $1$, then the tangency graph of the packing is the
unit-distance graph for these points.  The \emph{finite version} of the Borsuk
conjecture can be formulated as follows: the chromatic number of the
unit-distance graph for a set of points with maximum distance $1$ is at most
$d+1$.  So the chromatic number problem for unit ball packings is the
``opposite'' of the Borsuk conjecture.  By ordering the unit balls by height,
we see that the chromatic number of a unit ball packing is at most one plus the
one-side kissing number.

The Borsuk conjecture was first disproved by Kahn and Kalai~\cite{kahn1993}.
Recently, Bondarenko~\cite{bondarenko2014} found a counter-example for Borsuk
conjecture in dimension $65$. His construction was then slightly improved by
Jenrich~\cite{jenrich2014} to dimension $64$, which is the current record for
the smallest counter-example.  Their construction is based on geometric
representations of strongly regular graphs.

In this note, we use the technique of Bondarenko to find unit ball packings
with strongly regular tangency graphs, whose chromatic numbers are
significantly higher than their dimensions.  In particular
\begin{theorem}
	For every prime power $q$, there is a unit ball packing of dimension
	$d=q^3-q^2+q$ whose tangency graph is strongly regular with chromatic number
	$\chi(d)=q^3+1$.
\end{theorem}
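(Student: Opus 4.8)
The plan is to realize the promised packing as the spectral (eigenspace) representation of a strongly regular graph coming from a generalized quadrangle, and then to read off its chromatic number from the combinatorics of the quadrangle. Concretely, I would take $\mathcal{Q}=Q^-(5,q)$, the elliptic quadric in $\mathrm{PG}(5,q)$, whose point--line geometry is the generalized quadrangle $\mathrm{GQ}(q,q^2)$: it has $n=(q+1)(q^3+1)$ points, each line carries $q+1$ points, and each point lies on $q^2+1$ lines. Its collinearity graph $\Gamma$ (vertices $=$ points, edges $=$ collinear pairs) is strongly regular with $k=q^3+q$, $\lambda=q-1$, $\mu=q^2+1$, and hence has the three eigenvalues $k$, $r=q-1$ and $\theta=-(q^2+1)$. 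Using the trace conditions $f+g=n-1$ and $k+fr+g\theta=0$ for the multiplicities $f,g$ of $r,\theta$, a short computation gives that the multiplicity of the least eigenvalue $\theta$ is exactly $g=q^3-q^2+q=d$.

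The geometric representation is then the eigenspace embedding used by Bondarenko. Let $E$ be the orthogonal projection onto the $\theta$-eigenspace; it is a symmetric idempotent of rank $g=d$, so factoring the Gram matrix $E=U^{\mathsf{T}}U$ with $U=[\,v_1\ \cdots\ v_n\,]$ produces vectors $v_1,\dots,v_n\in\mathbb{R}^{d}$ that span $\mathbb{R}^d$ and satisfy $\langle v_i,v_i\rangle=g/n$, while $\langle v_i,v_j\rangle$ takes just two off-diagonal values $\alpha$ (collinear) and $\beta$ (non-collinear), since $E=\frac{(A-kI)(A-rI)}{(\theta-k)(\theta-r)}$ is a quadratic polynomial in $A$. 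Thus $\{v_i\}$ is a two-distance set in dimension exactly $d$. The point that must be checked is the sign: the explicit formula yields $\alpha-\beta=\frac{1}{\theta-r}=-\frac{1}{q^2+q}<0$, so \emph{non-collinear} pairs are strictly closer than collinear pairs. Hence, after scaling so that the smaller distance equals the ball diameter, placing equal balls at the $v_i$ produces a genuine unit ball packing in $\mathbb{R}^d$ whose tangency graph is the \emph{complement} $\overline{\Gamma}$, the (strongly regular) non-collinearity graph.

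It remains to compute $\chi(\overline{\Gamma})$. For the lower bound, an independent set of $\overline{\Gamma}$ is a clique of $\Gamma$, i.e.\ a set of pairwise collinear points; since a generalized quadrangle contains no triangle off a line, such a set lies on a single line and so has at most $q+1$ points. Therefore $\alpha(\overline{\Gamma})=q+1$ and $\chi(\overline{\Gamma})\ge n/(q+1)=q^3+1$. For the matching upper bound I would invoke a \emph{spread} of $Q^-(5,q)$, i.e.\ a partition of its point set into $q^3+1$ pairwise disjoint lines, which is classically known to exist (for instance the Hermitian spread obtained from the duality between $Q^-(5,q)$ and $H(3,q^2)$). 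Each spread line is a clique of $\Gamma$, hence an independent set of $\overline{\Gamma}$, so colouring all points of a given spread line alike is a proper colouring of $\overline{\Gamma}$ using $q^3+1$ colours. Combining the two bounds gives $\chi(\overline{\Gamma})=q^3+1$, as required.

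The two places I expect to need care are exactly the sign computation in the second step --- obtaining $\alpha<\beta$ is precisely what makes this the ``opposite'' of Bondarenko's Borsuk construction, where the same embedding is used but the collinear (diameter) pairs are the relevant ones --- and the appeal to the existence of a spread for the upper bound on the chromatic number; the lower bound and the dimension count are then routine, and the resulting estimate $\chi(d)\ge d+(q^2-q+1)$ is what improves the earlier $d+O(1)$ bounds.
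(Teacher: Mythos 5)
Your proposal is correct and takes essentially the same route as the paper: the Bondarenko-style eigenspace embedding of the $GQ(q,q^2)$ collinearity graph into the least-eigenvalue eigenspace of dimension $g=q^3-q^2+q$ (equivalently, the paper's $r$-eigenspace of the complement), with the tangency graph being $\overline{GQ}(q,q^2)$, and the upper bound $\chi\le q^3+1$ obtained from a spread of the quadrangle exactly as in the paper's Proposition (which cites Payne--Thas Theorem~3.4.1(ii), matching your Hermitian spread). The only cosmetic difference is your lower bound, where you compute $\alpha(\overline{\Gamma})=q+1$ directly from the triangle axiom of generalized quadrangles (cliques lie on lines) and use $\chi\ge n/\alpha$, whereas the paper bounds the same quantity spectrally via Hoffman's bound $1-k/s$; both give $q^3+1$, and your sign verification $\alpha-\beta=1/(\theta-r)<0$ correctly establishes that non-collinear pairs realize the minimum distance.
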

Examples are given by the graphs of generalized quadrangles with parameters
$(q,q^2)$.  This yields the first non-constant lower bound for the difference
$\chi(d) - d$.

In view of~\cite{kahn1993}, we propose the following conjecture, and hope that
examples in this note may help further improvement of the lower bound.
\begin{conjecture}
	There is a constant $c$ such that $\chi(d)\ge c^{\sqrt d}$.
\end{conjecture}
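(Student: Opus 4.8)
The plan is to abandon the two-distance (strongly regular) regime entirely, since it is fundamentally incapable of producing the required growth, and instead to import the exponential chromatic lower bounds that Frankl--Wilson and Kahn--Kalai \cite{kahn1993} obtained for intersection graphs. Writing $\chi(d)$ for the quantity of the Problem, the elementary inequality $\chi(G)\ge |V(G)|/\alpha(G)$ reduces the conjecture to exhibiting, in some dimension $d=O(n^2)$, a finite point set $P\subset\mathbb R^{d}$ with $|P|=2^{\Theta(n)}$ whose \emph{minimum}-distance graph $G$ (the tangency graph, after scaling the minimum distance to $1$) has independence number $\alpha(G)=2^{o(n)}$. One would then conclude $\chi(d)\ge\chi(G)\ge 2^{\Theta(n)}=2^{\Theta(\sqrt d)}$, which is the assertion for any $c>1$; the exponent $\sqrt d$ (rather than $d$) is exactly the slack one expects to pay for the packing constraint, and a merely quadratic blow-up of the dimension relative to $n$ is affordable.

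First I would realize the vertex set combinatorially, as in the classical arguments. Take $P$ to be the characteristic vectors of the $k$-subsets of $[n]$ for a suitable $k\approx n/2$, placed on a sphere so that pairwise distances are governed by intersection sizes and $|P|=\binom nk=2^{\Theta(n)}$. An independent set in the tangency graph is then a family of $k$-sets no two of which meet in the prescribed intersection size; for the right prime $p$ and forbidden intersection the Frankl--Wilson theorem bounds such a family by $\binom{n}{p-1}$, which is $2^{o(n)}$ whenever $p=o(n/\log n)$. This is precisely the mechanism behind the $(1.2)^{d}$ lower bound for the chromatic number of $\mathbb R^{d}$, and it is the only route presently known to an exponential bound of the required shape.

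The hard part---and the reason this is stated as a conjecture rather than a theorem---is the \emph{packing constraint}, which forces the high-chromatic distance to be the globally \emph{minimum} distance of $P$. In the classical Frankl--Wilson configurations the critical intersection value corresponds to a distance that is \emph{not} minimal: pairs of $k$-sets with larger intersection sit strictly closer, so the scaled configuration is not a ball packing and its unit-distance edges are not tangencies. One cannot escape into the two-distance world either, because an $s$-distance set in $\mathbb R^{d}$ has at most $\binom{d+s}{s}$ points; a strongly regular graph is a two-distance set and so never has more than $O(d^2)$ vertices, whence only polynomial chromatic number. This is exactly why the construction of the Theorem saturates at $\chi(d)-d\approx d^{2/3}$. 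To reach $2^{\Theta(n)}$ points one is therefore \emph{obliged} to use $s=\Theta(\sqrt d)$ distinct distances, and simultaneously to arrange that the combinatorially forbidden intersection is realized by the \emph{smallest} of them.

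The main obstacle is thus to engineer a sub-family of $k$-sets, still of size $2^{\Theta(n)}$, in which the Frankl--Wilson forbidden intersection corresponds to the globally minimum distance---equivalently, to delete or suppress the ``too-close'' pairs (those of large intersection) without destroying either the exponential size or the Frankl--Wilson independence bound. I would attempt this by restricting to sets whose pairwise intersections are confined to a short window $[t-r,t]$, so that only a narrow range of distances occurs with the critical one minimal, or by deforming the spherical embedding until the target distance becomes extremal, and then re-running the intersection bound on the surviving family. Controlling the independence number of the thinned family is where the difficulty concentrates: any restriction strong enough to make the critical distance minimal also tends to pull the family toward a low-chromatic, near-Johnson-graph regime, and breaking this tension between minimality and exponential richness is precisely what the conjecture demands.
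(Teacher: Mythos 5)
The statement you set out to prove is presented in the paper as an open \emph{conjecture}, not a theorem: the paper's actual results only give $\chi(d)\ge q^3+1$ at $d=q^3-q^2+q$ (so a gap $\chi(d)-d\approx d^{2/3}$, as you correctly note), and the $c^{\sqrt d}$ bound is proposed ``in view of'' Kahn--Kalai precisely because nobody knows how to prove it. So there is no proof in the paper to compare against, and your text, to its credit, does not claim to close the question. Your reductions are sound as far as they go: $\chi(G)\ge v/\alpha(G)$; the minimum-distance graph of a point set with minimum distance $1$ is the tangency graph of a unit ball packing; the Frankl--Wilson bound $\binom{n}{p-1}=2^{o(n)}$ on forbidden-intersection families; and the $s$-distance bound $\binom{d+s}{s}$ correctly explains why strongly regular (two-distance) constructions are capped at polynomially many vertices and hence cannot reach exponential chromatic number.

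The genuine gap is the step you defer to the last paragraph, which is the entire content of the conjecture: producing a family of $2^{\Theta(n)}$ points in dimension $O(n^2)$ in which the Frankl--Wilson-forbidden intersection is realized by the globally \emph{minimum} distance, with the independence bound surviving. Neither of your two devices works as stated. (i) Restricting pairwise intersections to a window $[t-r,t]$ is itself a forbidden-intersection condition, and such restrictions are exactly what Frankl--Wilson-type theorems bound: a family with all pairwise intersections confined to a short window typically has size $2^{o(n)}$, destroying the numerator of $v/\alpha$ before you ever get to bound $\alpha$; you give no argument that exponential size survives. (ii) In any embedding where Euclidean distance is a monotone function of $|A\cap B|$ --- as it is for characteristic vectors on a sphere --- the minimum distance is pinned at the maximal intersection $k-1$, i.e., at Johnson-graph edges, whose chromatic number is only $O(n)$; escaping monotonicity requires a positive-definite kernel on the Johnson scheme whose maximum sits at an interior intersection value. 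The known tensoring trick of Kahn--Kalai does the opposite: it makes the critical distance the \emph{maximum} (which is what Borsuk counterexamples need), and the paper's own remark that the packing problem is the ``opposite'' of Borsuk is exactly the observation that this reversal is not known to be achievable. Your closing sentence concedes that breaking this tension ``is precisely what the conjecture demands,'' which is an accurate assessment --- and the reason the proposal is a research program rather than a proof.
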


\subsection*{Acknowledgement} 

I would like to thank Yaokun Wu and Eiichi Bannai for the dinner at Shanghai
Jiao Tong University, where the idea of this note originated.  I'm also
grateful to Martin Aigner, Michael Joswig and Bhaskar Bagchi for helpful
suggestions and comments, and David Roberson and anonymous referees for
pointing out mistakes in preliminary versions of the note.

\section{Strongly regular graphs}\label{sec:prepare}
We use \cite{cameron1991} for general references on strongly regular graphs.

Let $G$ be a strongly regular graph with parameters $(v, k, \lambda, \mu)$.
That is, $G$ is a $k$-regular graph on~$v$ vertices such that every pair of
adjacent vertices have $\lambda$ neighbors in common and every pair of
non-adjacent vertices have $\mu$ neighbors in common.  We assume that
\begin{equation}\label{eq:assume}
	\lambda-\mu\ge-2k/(v-1).
\end{equation}
If this is not the case, we may replace $G$ by its complement $\bar G$, which
is a strongly regular graph with parameters $(v, v-k-1, v-2k-2+\mu,
v-2k+\lambda)$.  For our study of ball packings, we may focus on connected
graphs, therefore $\mu>0$.  For any vertex of $G$, the graphs induced by its
neighbors in $G$ and by its neighbors in $\bar G$ are respectively the
\emph{first} and the \emph{second subconstituent} of~$G$.

The adjacency matrix $A$ of $G$ has three eigenvalues $k$, $r$, $s$ with
multiplicities $1$, $f$, $g$, respectively.  They can be expressed in terms of
the parameters as follows:
\begin{align*}
	r,s&=(\lambda-\mu\pm\delta)/2,\\
	f,g&=(v-1\pm\Delta)/2,
\end{align*}
where $\delta=\sqrt{(\lambda-\mu)^2+4(k-\mu)}$ and
$\Delta=((v-1)(\mu-\lambda)-2k)/\delta\le 0$.  The eigenvalues of $\bar G$ are
$v-k-1$, $-s-1$, $-r-1$ with multiplicities $1$, $g$, $f$, respectively.  Note
that $r>0>s+1$ and $f\le g$.  

Let $I$ be the identity matrix and $J$ the all-ones matrix.  Then
\[
	E=(A-sI)(I-J/v)
\]
is an eigenmatrix of $A$ corresponding to the eigenvector $r$, and the column
vectors of $E$ (labeled by vertices of $G$) form a unit spherical $2$-distance
set on the sphere $\mathbb{S}^{f-1}\subset\mathbb{R}^f$, with angles
$\cos\alpha=r/k$ for adjacent vertices and $\cos\beta=-(r+1)/(v-k-1)$ for
non-adjacent vertices~\cite{bondarenko2014}; see
also~\cite[Theorem~4.1.4]{brouwer1989}.  By putting a ball of radius
$\sin(\alpha/2)=\sqrt{(1-r/k)/2}$ at each point of the $2$-distance set, we
obtain a ball packing whose tangency graph is $G$.

By Hoffman's bound~\cite{hoffman1970} (see also \cite{delsarte1973}
\cite[Proposition~1.3.2]{brouwer1989}), the clique number of the complement
$\omega(\bar G)$ is at most $1+(v-k-1)/(1+r)$, so the chromatic number
$\chi(G)$ is at least 
\[
	v/\big(1+\frac{v-k-1}{1+r}\big)=1-k/s.
\]

\begin{remark}
	As B.\ Bagchi pointed to the author, the same argument works for any regular
	graph.  More specifically, if the adjacency matrix of a $k$-regular graph $G$
	of $v$ vertices has the minimum eigenvalue $s$ of multiplicity $g$, then $G$
	is the tangency graph of a ball packing in dimension $v-g-1$, and the
	chromatic number of $G$ is at least $1-k/s$.  The advantage of strongly
	regular graphs is that $g$ tends to be big, which would decrease the
	dimension.
\end{remark}

\section{Colorful strongly regular ball packings}\label{sec:result}
From Brouwer's online list of strongly regular graphs~\cite{brouwer}, we notice
some graphs such that $f+3 < 1-k/s$.  In Table~\ref{tab}, we list their
parameters, eigenvalues with multiplicities, and the Hoffman bound $1-k/s$.
For comparison, we highlight the dimension $f$ and the Hoffman bound $1-k/s$.
For the complement of McLaughlin graph, the Hoffman bound gives the exact value
of the chromatic number~\cite{haemers1995}.

\begin{remark}
	For many graphs in the table, the conventional parameters in the literature
	do not satisfy our assumption~\eqref{eq:assume}.  For these graphs, we use
	the parameters of their complements, as explained in the beginning of
	Section~\ref{sec:prepare}.
\end{remark}

\begin{table}[hbt]
	\scriptsize
	\[
		\begin{tabu}{| r | c c c c | r >{\columncolor[gray]{0.8}}l | r l | >{\columncolor[gray]{0.8}}c |}
		  \text{name or ref} &
			v & k & \lambda & \mu &
			r & f & s & g &
			1-k/s \\

			\hline

      %
      %
			\text{Higman--Sims} &
			100 & 77 & 60 & 56 &
			7 & 22 & -3 & 77 &
		 	80/3 \\

      %
			\text{\cite{degraer2008,leemans2013}} &
			105 & 72 & 51 & 45 &
			9 & 20 & -3 & 84 &
			25 \\

			\text{\cite{degraer2008,leemans2013}} &
      120 & 77 & 52 & 44 &
      11 & 20 & -3 & 99 &
      80/3 \\

      \text{\cite{haemers2002}} &
      126 & 75 & 48 & 39 &
      12 & 20 & -3 & 105 &
      26 \\

			\text{2nd subconst. of McL} &
			162 & 105 & 72 & 60 &
			15 & 21 & -3 & 140 &
			36 \\

      \text{\cite{haemers1981}} &
      175 & 102 & 65 & 51 &
      17 & 21 & -3 & 153 &
      35 \\

			\text{\cite{haemers1981, degraer2008}} &
      176 & 105 & 68 & 54 &
      17 & 21 & -3 & 154 &
      36 \\

      \text{\cite{haemers1981}} &
      176 & 85 & 48 & 34 &
      17 & 22 & -3 & 153 &
      88/3 \\

      \text{\cite{delsarte1973}} &
      243 & 132 & 81 & 60 &
      24 & 22 & -3 & 220 &
      45 \\

			\text{\cite{degraer2008,leemans2013}} &
      253 & 140 & 87 & 65 &
      25 & 22 & -3 & 230 &
      143/3 \\

			\text{McLaughlin} &
			275 & 162 & 105 & 81 &
			27 & 22 & -3 & 252 &
			55 \\

      \text{\cite{goethals1975}} &
      276 & 135 & 78 & 54 &
      27 & 23 & -3 & 252 &
      46 \\

      \text{\cite{bondarenko2013}} &
      729 & 520 & 379 & 350 &
      22 & 112 & -5 & 616 &
      621/5 \\

    \end{tabu}
  \]
  \caption{
    Strongly regular graphs with high chromatic numbers\label{tab}
  }
\end{table}

Two infinite families are not included in the table. One is the complements to
the C20 family from Hubaut~\cite{hubaut1975}, recovered by
Godsil~\cite{godsil1992}*{Lemma~5.3}, with the parameters
\[
	(q^3, (q+1)(q^2-1)/2, (q+3)(q^2-3)/4+1, (q+1)(q^2-1)/4)
\]
where $q$ is an odd prime power.  They can be represented as \emph{unit} ball
packings in dimension $f=q^2-q$ with chromatic number at least $1-k/s=q^2$,
which already provide a non-constant difference $\chi(d)-d$.

The other family is the complements to the point graphs of the generalized
quadrangles with parameters $(q,q^2)$, which provides an even better
difference.

A \emph{generalized quadrangle}~\cite{payne2009} with parameters
$(\sigma,\tau)$, denoted by $GQ(\sigma,\tau)$, is an incidence structure
$(P,L,\in)$, where $P$ is the set of points and $L$ is the set of lines,
satisfying the following axioms:
\begin{itemize}
	\item Each point is incident with $\tau+1$ lines and two distinct points
		are incident with at most one line.
	\item Each line is incident with $\sigma+1$ points and two distinct lines
		are incident with at most one point.
	\item For a point $p$ and a line $\ell$ such that $p\notin\ell$, there is a
		unique pair $(p',\ell')$ such that $p\in\ell'\ni p'\in\ell$.
\end{itemize}
It is known that $GQ(q,q^2)$ exists when $q$ is a prime power, and is unique
for $q=2,3$.  

The \emph{point graph} of a generalized quadrangle, also denoted by
$GQ(\sigma,\tau)$, has $P$ as the set of vertices, and two vertices are joined
by an edge if they are incident to the same line.  It is a strongly regular
graph with parameters
\[
	((\sigma\tau+1)(\sigma+1), \sigma(\tau+1), \sigma-1, \tau+1).
\]
The complement of $GQ(q,q^2)$, denoted by $\overline{GQ}(q,q^2)$, is a strongly
regular graph with parameters
\[
	((q+1)(q^3+1), q^4, q(q-1)(q^2+1), (q-1)q^3).
\]
In particular, $\overline{GQ}(2,4)$ is the Schl\"afli graph and $GQ(3,9)$ is
the first subconstituent of the McLaughlin graph ($112$ vertices).
$\overline{GQ}(q,q^2)$ can be represented as a ball packing in dimension
$f=q^3-q^2+q$, and the chromatic number of $\overline{GQ}(q,q^2)$ is at
least~$q^3+1$.  Moreover, 

\begin{proposition}
	The chromatic number of $\overline{GQ}(q,q^2)$ is exactly $q^3+1$.
\end{proposition}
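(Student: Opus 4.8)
The plan is to prove the matching upper bound, since the lower bound $\chi(\overline{GQ}(q,q^2))\ge q^3+1$ is already supplied by the Hoffman bound $1-k/s=q^3+1$ computed above. So it suffices to exhibit a proper colouring of $\overline{GQ}(q,q^2)$ with $q^3+1$ colours. First I would restate the problem in terms of the point graph $GQ(q,q^2)$: a colour class of $\overline{GQ}(q,q^2)$ is an independent set of the complement, i.e.\ a clique of the point graph, i.e.\ a set of pairwise collinear points. The key structural observation is that every such clique lies on a single line. Indeed, two collinear points $p,p'$ determine a unique common line $\ell$, which carries $\sigma+1=q+1$ points; the remaining $\sigma-1=q-1$ points of $\ell$ are common neighbours of $p$ and $p'$, and since the point graph is strongly regular with $\lambda=\sigma-1$, these account for \emph{all} common neighbours. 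Hence any point collinear with both $p$ and $p'$ lies on $\ell$, so every clique is a subset of a line.

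Consequently a proper colouring with $N$ colours partitions the $(q+1)(q^3+1)$ points into $N$ cliques, each of size at most $q+1$, which forces $N\ge q^3+1$, with equality precisely when every colour class is a \emph{full} line. Thus a colouring with $q^3+1$ colours is exactly the same datum as a \emph{spread} of $GQ(q,q^2)$: a family of $q^3+1$ pairwise disjoint lines covering all the points. The Proposition is therefore equivalent to the existence of a spread of $GQ(q,q^2)$.

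The genuinely geometric step, and where I expect the real work to lie, is producing such a spread. Here I would invoke the classical model $GQ(q,q^2)\cong Q^{-}(5,q)$, the elliptic quadric in $\mathrm{PG}(5,q)$, whose point--line dual is the Hermitian surface $H(3,q^2)$ in $\mathrm{PG}(3,q^2)$. Under this duality a spread of $Q^{-}(5,q)$ corresponds to an \emph{ovoid} of $H(3,q^2)$, a set of $q^3+1$ points meeting every line of the surface exactly once. Such ovoids are classical: a non-tangent plane of $\mathrm{PG}(3,q^2)$ meets $H(3,q^2)$ in a non-degenerate Hermitian curve with exactly $q^3+1$ points, the plane contains no generator of the surface, and each generator meets the plane in a single point; hence no two points of the section are collinear on $H(3,q^2)$, so the section is an ovoid. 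Dualizing yields the required spread for every prime power $q$, completing the upper bound.

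The main obstacle is precisely this existence statement for spreads; the two-paragraph reduction is routine, whereas the spread rests on the structure theory of classical generalized quadrangles. I would either give the Hermitian-ovoid argument above in full, or, if brevity is preferred, cite the known existence of spreads of $Q^{-}(5,q)$ directly and then read off the colouring.
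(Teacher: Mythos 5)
Your proof is correct and follows essentially the same route as the paper: both reduce the upper bound to the existence of a spread of $GQ(q,q^2)$, which partitions the $(q+1)(q^3+1)$ points into $q^3+1$ cliques of size $q+1$, while the lower bound comes from the Hoffman bound already computed. The only difference is that where the paper simply cites Payne--Thas (Theorem~3.4.1(ii)) for the existence of spreads, you unpack that citation by constructing the spread explicitly through the duality $Q^{-}(5,q)\leftrightarrow H(3,q^2)$ and a non-degenerate plane section of the Hermitian surface --- a correct and self-contained argument (and your clique-lies-on-a-line observation even yields an elementary alternative to the Hoffman lower bound), but not a different method.
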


\begin{proof}
	A \emph{spread} of a generalized quadrangle is a set of lines such that each
	point is incident with a unique line in the set.
	By~\cite{payne2009}*{Theorem~3.4.1(ii)}, a generalized quadrangle $GQ(q,q^2)$
	has spreads, meaning that the vertices of the graph $GQ(q,q^2)$ can be
	partitioned into $q^3+1$ cliques of size $q+1$.  So the chromatic number of
	$\overline{GQ}(q,q^2)$ is at most~$q^3+1$.
\end{proof}

This proves the Theorem.  By the Lemma in Section~\ref{sec:problem}, we have
also constructed a ball packing in dimension $q^3-q^2+q+1$ with chromatic
number $q^3+3$.  This is, to the knowledge of the author, the first
construction with non-constant difference $\chi(d)-d$.

\begin{remark}
	Up to complement, the Clebsch graph, the Higman--Sims graph, the McLaughlin
	graph, the second subconstituent of the McLaughlin graph ($162$ vertices),
	and $\overline{GQ}(q,q^2)$ are the only known examples of Smith
	graphs~\cite{smith1975, cameron1978}.  They can be constructed from a rank
	$3$ permutation group such that the stabilizer of a vertex has rank $\le 3$
	on both subconstituents.  It turns out that Smith graphs tend to have high
	chromatic number (the Clebsch graph is noticed by
	Cantwell~\cite{mathoverflow}).
\end{remark}

\begin{remark}
	Hoffman's bound is not always a good bound for the chromatic number.  Our
	method does not recover, for example, Bondarenko's counter-example to the
	Borsuk conjecture.  The author believes that there are other strongly regular
	graphs with high chromatic number.  However, for the conjecture in
	Section~\ref{sec:problem}, the power of strongly regular graphs might be very
	limited.
\end{remark}


\bibliography{References}

\end{document}